\newtheorem{Proposition}{Proposition}[section]
\newtheorem{Theorem}{Theorem}[section]
\newtheorem{Remark}{Remark}[section]
\newtheorem{Example}{Example}[section]
\numberwithin{equation}{section}
\newcommand\be{\begin{eqnarray*}}
\newcommand\ee{\end{eqnarray*}}
\newcommand\ben{\begin{eqnarray}}
\newcommand\een{\end{eqnarray}}
\newcommand{\comment}[1]{}
\def\dvg{{\rm div}}
\def\IntO{\int\limits_\Omega}
\def\dx{\, {\rm d}\mathbf{x}}
\def\dxone{\, {\rm d}x}
\def\idata{\Upsilon}
\def\idatum{\gamma}
\def\solmap{\mathcal{S}}
\def\Eind{E^{\rm ind}}
\def\Eworst{E^{\rm worst}}
\def\Ebest{E^{\rm best}}
\def\Ecom{E^{\rm red}}
\title{Worst case approach in convex minimization problems
with uncertain data}
\author{O. Mali\footnote{University of Jyvaskyla, Department of Mathematical
Information Technology, P.O. Box 35 (Agora), FI-40014, University of Jyvaskyla,
Finland}}
\begin{document}

\maketitle

\begin{abstract}
This paper concerns quantitative analysis of errors generated by incompletely known data in convex minimization
problems\footnote{{\em 2010 Mathematical Subject Classification.} 65N15, 49N30, 49N15 \\
{\em Key words and phrases.} Incompletely known data, elliptic boundary value problems, error bounds}.
The problems are discussed in the mixed setting and
the duality gap is used as the fundamental error measure. The influence of
the indeterminate data is measured using the worst case scenario
approach. The worst case error is decomposed into two computable quantities,
which allows the quantitative
comparison between errors resulting from the inaccuracy of the approximation
and the data uncertainty.
The proposed approach is demonstrated on a paradigm of a
nonlinear reaction-diffusion problem together with numerical examples.
\end{abstract}

\section{Introduction}

In mathematical models, the parameters (data) are often not completely known.
Instead, their values are based on measurements or assumptions.
For analysts, it is important to quantify the effect that the data uncertainty has on results generated
by the mathematical model. In particular, the data uncertainty
generates an accuracy limit, beyond which information cannot be extracted and all
(numerical or analytical) efforts become meaningless or even misleading.

The analysis of
uncertain data in PDEs (in the context discussed in this paper)
dates back to \cite{Babushka1961} (see also \cite{Elishakoff1983,Walsh1986}
and references therein). One of the main lines of the uncertainty analysis
is the probabilistic approach, where the
uncertain data is treated as a random variable. This leads to stochastic PDEs
(see, e.g., \cite{Schueller1997, Chow2007} and
references therein). A recent overview of the related numerical methods can be
found from \cite{Lordetall2014}. Analysis of
stochastic PDEs with additional noise can be found from
\cite{Helgeetall2010,Kovacsetall2012,Kovacsetall2013}.

The analysis in this paper is based on the duality theory (convex analysis) developed by
W. Fenchel, J.-J. Moreau,
and R. T. Rockafellar (see, e.g., \cite{EkelandTemam1976,Rockafellar1970}
and references therein).
S. Repin applied the duality theory
to derive computable a posteriori error estimates for convex problems
in \cite{Repin1997,Repin2000,NeittaanmakiRepin2004}.
In this paper, a class of convex variational problems (cf. (\ref{eq:Jdef:gen})) are
discussed and the quantity of interest is
the maximal duality gap related to a family of problems generated by the admissible
(incompletely known) data.
The approach is motivated by the worst
case scenario method (see, e.g., \cite{Hlavacek2004} and
\cite[Ch. 6.4]{BoydVanderberghe2004}). In contrast to the probabilistic methods,
the worst case scenario method does not assume which data are more or less
likely than other, but is solely interested to find the most drastic (measured in a
problem dependent manner) event possible. The benefit of the worst case scenario approach
is that there is no need to make any
assumptions concerning the probability distribution of unknown parameters, but
the results may be overpessimistic.
%

In this paper,
computable quantities (or two-sided estimates) related to the
approximation error and the error generated by
the incompletely known data are presented.
This is of practical importance. Firstly,
it is meaningless to use computational resources to improve approximations
which are ``too close'' to (or even inside) the solution set. Secondly,
it it crucial to evaluate whether the indeterminacy
of the data is small enough to justify any implications based on
the results (regardless of any numerical aspects).

For linear problems, the main ideas for the uncertainty analysis (related to
the one presented in this paper) were first discussed in
\cite[Ch. 6.8]{NeittaanmakiRepin2004} and \cite[Ch. 9.6]{Repin2008}.
These ideas rely on the use of functional a posteriori error estimates,
which are guaranteed and depend explicitly on the problem data.
Additional research is exposed in
\cite{MaliRepin2007,MaliRepin2010,MaliNeittaanmakiRepin2014,MaliRepin2015}.
It is worth mentioning that
the results partially coincide with the results of
\cite{BabushkaChatzipantelidis2002} obtained in a probabilistic framework
(see \cite[Rem. 3.1]{MaliRepin2015}).

The paper is composed as follows. In Sect. \ref{se:dual}, standard definitions and
results from the duality theory are promptly exposed. The relevant part for the
later analysis are, conjugate functionals, the definitions of
the primal and dual problems, the duality gap, and the necessary conditions
(subdifferential inclusions) in terms of the compound functionals (see, e.g., \cite{EkelandTemam1976}).
The use of the duality gap as the fundamental error quantity is motivated in
Sect. \ref{se:exdual}, where it is shown that
in linear examples (generated by quadratic energy functionals) certain important
a posteriori error estimation results, e.g., the Prager-Synge estimate \cite{PragerSynge1947},
are representations of the duality gap.

Analysis of the respective mathematical models, where the data are not completely known
is discussed in Sect. \ref{se:ind}. The general ideas based on the duality theory
 are explained in Sect. \ref{se:wc}. In Sect.
\ref{se:ex:wc}, the new error quantification method is applied to a nonlinear
reaction diffusion problem. Two-sided bounds for the error generated by
the incompletely known data are
presented in Theorem \ref{th:bounds}. The practicality of these bounds
is demonstrated in numerical examples in Sect. \ref{se:num}.

\section{Convex variational problems}
\label{se:dual}

\subsection{Definitions}

Let $V$ and $Y$ be a Banach spaces with $V^*$ and $Y^*$ as the corresponding dual spaces.
The duality pairing is denoted by brackets and subindices, e.g., $\langle v^*, v \rangle_V$,
where $v^* \in V^*$ and $v \in V$.
The set of functions $V \rightarrow \overline{\mathbb{R}}$, which
are pointwise supremum over a set of affine functions is denoted
by $\Gamma(V)$. $\Gamma_0(V)$ is the subset of functions other
than constants $+\infty$ and $-\infty$.
Henceforth, assume that $F \in \Gamma_0(V)$ and $G \in \Gamma_0(Y)$.

The problems discussed in this paper are of the following type.
The primal problem is
\begin{equation} \label{eq:Jdef:gen}
J(u) = \inf\limits_{v \in V} J(v) := F(v) + G(\Lambda v) ,
\end{equation}
and the respective dual problem is
\begin{equation} \label{eq:Idef:gen}
I^*(p^*) = \sup\limits_{y^* \in Y^*} I^*(y^*) :=  -F^*(\Lambda^* y^*) - G^*(-y^*) ,
\end{equation}
where $F^*$ and $G^*$ are the respective Fenchel conjugates
\[
F^*(v^*) := \sup\limits_{v \in V} \{ \langle v^* , v \rangle_V - F(v) \}
\quad {\rm and} \quad
G^*(y^*) := \sup\limits_{y \in Y} \{ \langle y^* , y \rangle_Y - G(y) \},
\]
and
$\Lambda \in \mathcal L( V , Y )$ is a bounded linear operator with an adjoint
$\Lambda^* \in \mathcal L( Y^* , V^* )$, i.e.,
\begin{equation} \label{eq:adj}
\langle y^*, \Lambda v \rangle_Y = \langle \Lambda^* y^*, v \rangle_V ,
\quad \forall v \in V, \, y^* \in Y^* .
\end{equation}
The solutions of the primal and dual problem are characterized by the following Theorem
(see, e.g.,
\cite[Rem 4.2]{EkelandTemam1976} or \cite[Thr 2.2]{Repin2000}).
%
\begin{Theorem} \label{th:ex}
Let
$J$ be coercive on $V$. If
there exists $u_0 \in V$ such that \mbox{$F(u_0) < +\infty$}, \mbox{$G(\Lambda u_0) < +\infty$}, and
$G$ is continuous at $\Lambda u_0$, then there exists
$u \in V$ and $p^* \in Y^*$, which are solutions of problems (\ref{eq:Jdef:gen}) and
(\ref{eq:Idef:gen}),
respectively. Moreover, the duality gap (denoted by $E(u,p^*)$) vanishes, i.e.,
\begin{equation} \label{eq:energ:equal}
E(u,p^*) := J(u) - I^*(p^*) = 0,
\end{equation}
and
\begin{align}
\label{eq:sub1}
\Lambda^* p^* & \in \partial F(u) ,\\
\label{eq:sub2}
- p^* & \in \partial G(\Lambda u) .
\end{align}
\end{Theorem}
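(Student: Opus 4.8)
The plan is to follow the classical Fenchel--Rockafellar argument, splitting the statement into four parts: the elementary weak-duality inequality, existence of a primal minimizer, the simultaneous vanishing of the gap and attainment of the dual supremum, and finally the extremality relations (\ref{eq:sub1})--(\ref{eq:sub2}). First I would record that weak duality $J(v)\geq I^*(y^*)$ holds for \emph{every} admissible pair, with no hypotheses at all. The two Fenchel--Young inequalities $F(v)+F^*(\Lambda^* y^*)\geq\langle\Lambda^* y^*,v\rangle_V$ and $G(\Lambda v)+G^*(-y^*)\geq\langle -y^*,\Lambda v\rangle_Y$, added together and combined with the adjoint identity (\ref{eq:adj}) to annihilate the cross terms $\langle\Lambda^* y^*,v\rangle_V-\langle y^*,\Lambda v\rangle_Y=0$, yield exactly $F(v)+G(\Lambda v)\geq -F^*(\Lambda^* y^*)-G^*(-y^*)$. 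Hence it suffices to produce a single pair $(u,p^*)$ at which equality is realized.

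For the existence of a primal minimizer I would invoke the direct method. Since $F\in\Gamma_0(V)$ and $G\in\Gamma_0(Y)$ are convex and lower semicontinuous and $\Lambda$ is continuous, the functional $J$ is convex and lower semicontinuous, hence weakly lower semicontinuous; coercivity keeps any minimizing sequence bounded, and weak compactness of the sublevel sets extracts a weakly convergent subsequence whose limit is a minimizer $u$.

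The step I expect to be the main obstacle is closing the gap together with attaining the dual supremum. Here I would introduce the marginal (value) function $h(q):=\inf_{v\in V}\{F(v)+G(\Lambda v-q)\}$, whose biconjugate at the origin equals the dual value and whose subdifferential $\partial h(0)$ consists precisely of the dual maximizers. The function $h$ is convex. The hypothesis that $F(u_0)$ and $G(\Lambda u_0)$ are finite and that $G$ is continuous at $\Lambda u_0$ is exactly the constraint qualification required: from $h(q)\leq F(u_0)+G(\Lambda u_0-q)$ it forces $h$ to be bounded above on a neighborhood of $0$, and a convex function that is majorized near an interior point is continuous, hence subdifferentiable, there. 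Any $p^*\in\partial h(0)$ is then a dual solution satisfying $J(u)=h(0)=I^*(p^*)$, which is precisely (\ref{eq:energ:equal}).

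Finally, with the gap closed, the two Fenchel--Young inequalities from the first step must both hold as equalities at the pair $(u,p^*)$. Equality in the first is, by the very definition of the subdifferential, the inclusion $\Lambda^* p^*\in\partial F(u)$ of (\ref{eq:sub1}); equality in the second is $-p^*\in\partial G(\Lambda u)$ of (\ref{eq:sub2}). This completes the scheme, the only delicate point being the subdifferentiability of the value function $h$ at the origin, for which the continuity of $G$ at $\Lambda u_0$ is indispensable.
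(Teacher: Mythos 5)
Your proof is correct and is essentially the same argument the paper relies on: the paper does not prove this theorem itself but cites Ekeland--Temam (Rem.~4.2) and Repin (2000), and your weak-duality / direct-method / value-function scheme, with continuity of $G$ at $\Lambda u_0$ serving as the constraint qualification that forces $\partial h(0)\neq\emptyset$, is precisely the classical Fenchel--Rockafellar proof given in those references. One caveat worth noting: your weak-compactness step silently assumes $V$ is reflexive, a hypothesis the paper's statement also omits (it says only ``Banach'') but which is explicit in the cited sources and genuinely needed for the primal minimizer to exist.
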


In the problems (\ref{eq:Jdef:gen}) and
(\ref{eq:Idef:gen}),
the most fundamental quantities are the primal and dual energies ($J$ and $I^*$).
If an approximation \mbox{$(v,y^*) \in V \times Y^*$} is inaccurate in the
sense of these energies,
then it is clear that drawing any conclusions based on it is highly dubious.
The approximation error is naturally
measured by the value of the respective duality gap $E(v,y^*)$.

The duality gap can be studied in greater detail based on how well
the approximation satisfies the subgradient conditions (\ref{eq:sub1}) and (\ref{eq:sub2}).
The following Proposition (see, e.g., \cite[Prop. 5.1]{EkelandTemam1976}) characterizes the set of subgradients.
%
\begin{Proposition} \label{pr:compound}
Under the earlier assumptions, $v^* \in \partial F(v)$ if and only if
\[
D_F(v,v^*) := F(v) + F^*(v^*) - \langle v^*, v \rangle_V = 0 .
\]
\end{Proposition}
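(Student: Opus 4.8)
The plan is to recognize this statement as the equality case of the Fenchel--Young inequality and to unwind the definitions of the conjugate $F^*$ and of the subdifferential $\partial F$. The starting observation is that $D_F(v,v^*) \geq 0$ for every pair $(v,v^*)$: indeed, retaining only the term $w = v$ inside the supremum defining $F^*(v^*)$ yields $F^*(v^*) \geq \langle v^*, v \rangle_V - F(v)$, which rearranges to the Fenchel--Young inequality $F(v) + F^*(v^*) \geq \langle v^*, v \rangle_V$. Thus the claimed characterization amounts to identifying exactly when this inequality is saturated.

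For the direction $(\Leftarrow)$, I would assume $D_F(v,v^*) = 0$, i.e.\ $F^*(v^*) = \langle v^*, v \rangle_V - F(v)$, and fix an arbitrary $w \in V$. From the definition of $F^*$ as a supremum I have $F^*(v^*) \geq \langle v^*, w \rangle_V - F(w)$, hence $F(w) \geq \langle v^*, w \rangle_V - F^*(v^*)$. Substituting the assumed value of $F^*(v^*)$ gives $F(w) \geq F(v) + \langle v^*, w - v \rangle_V$, which is precisely the subgradient inequality; since $w$ was arbitrary, this shows $v^* \in \partial F(v)$.

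For the direction $(\Rightarrow)$, I would start from $v^* \in \partial F(v)$, i.e.\ $F(w) \geq F(v) + \langle v^*, w - v \rangle_V$ for all $w \in V$, and rewrite it as $\langle v^*, w \rangle_V - F(w) \leq \langle v^*, v \rangle_V - F(v)$. Taking the supremum over $w \in V$ on the left-hand side yields $F^*(v^*) \leq \langle v^*, v \rangle_V - F(v)$, that is $D_F(v,v^*) \leq 0$. Combined with the Fenchel--Young inequality $D_F(v,v^*) \geq 0$ established above, this forces $D_F(v,v^*) = 0$, completing the equivalence.

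I do not expect any genuine obstacle here: the argument is purely a manipulation of the defining inequalities, and the standing assumption $F \in \Gamma_0(V)$ (so that $F^*$ is well defined and proper) guarantees that the quantities appearing are finite at the relevant points. The only step requiring mild care is the passage from the pointwise subgradient inequality to the supremum defining $F^*$; this is immediate once one notes that the subgradient inequality holds for \emph{every} $w$ simultaneously, so the bound survives the supremum.
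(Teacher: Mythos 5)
Your proof is correct and complete: both directions follow exactly the standard argument, and your handling of the Fenchel--Young inequality as the anchor of the equivalence is the right idea. The paper itself gives no proof of this proposition --- it simply cites \cite[Prop. 5.1]{EkelandTemam1976} --- and your argument is precisely the textbook proof found in that reference, so there is nothing to add beyond noting that your remark about $F \in \Gamma_0(V)$ ensuring properness (hence finiteness of $F(v)$ and $F^*(v^*)$ when either side of the equivalence holds) is exactly the mild care the argument needs.
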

\noindent The functional $D_F: V \times V^* \rightarrow \mathbb{R}$ is called the compound functional.
The Fenchel-Young inequality
\begin{equation*} 
D_F(v,v^*) \geq 0,
\quad \forall v \in V, \, v^* \in V^*
\end{equation*}
and Proposition \ref{pr:compound} show that $D_F(v,v^*)$ is a reasonable measure of
the violation of the relation \mbox{$v^* \in \partial F(v)$}
(\cite[Thr. 2.2]{Repin2000} and \cite[(7.1.3)]{NeittaanmakiRepin2004}).

\begin{Remark}
The duality gap (\ref{eq:energ:equal}) can be decomposed as follows:
\begin{multline} \label{eq:gapdef}
E(v,y^*) = F(v) + G(\Lambda v) + F^*(\Lambda^* y^*) + G^*(-y^*) \\
= F(v) + G(\Lambda v) + F^*(\Lambda^* y^*) + G^*(-y^*)
- \langle - y^*, \Lambda v  \rangle_Y - \langle \Lambda^* y^*, v \rangle_V \\
= D_F(v,\Lambda^* y^*) + D_G(\Lambda v, -y^*) ,
\end{multline}
%
where the two compound functionals measure
violations of relations (\ref{eq:sub1}) and (\ref{eq:sub2}), respectively. For analyst,
the form in (\ref{eq:gapdef}) is more practical than (\ref{eq:energ:equal}), since the values of the primal and
dual energies attained at the exact solution are unknown, but the compound functional values are always positive
and vanish only at the exact solution.
\end{Remark}

\subsection{Examples}
\label{se:exdual}

\subsubsection{Linear elliptic problem}
\label{se:lin}

Let $V$ and $Y$ be Hilbert spaces endowed with the inner products $(\cdot,\cdot)_V$
and $(\cdot,\cdot)_U$, and norms $\| \cdot \|_V$
and $\| \cdot \|_U$, respectively. Operators $\mathcal A:Y \rightarrow Y$ and $\mathcal B:V \rightarrow V$
are linear, bounded, symmetric, and strictly positive definite. Let $\ell \in V^*$.
This example concerns the linear problem
\begin{equation} \label{eq:ex:linlow}
\left( \Lambda^* \mathcal A \Lambda + \mathcal B \right) u = \ell .
\end{equation}
The primal energy functional associated with (\ref{eq:ex:linlow}) is
\[
J(v) = \tfrac{1}{2} \| \Lambda v \|_{\mathcal A}^2 +
\tfrac{1}{2} \| v \|_{\mathcal B}^2 - \langle \ell, v \rangle_V ,
\]
where
\[
\| y \|_{\mathcal A} := \sqrt{(\mathcal A y, y)_Y}
\quad {\rm and} \quad
\| v \|_{\mathcal B} := \sqrt{(\mathcal B v, v)_V}
\]
are equivalent norms in $Y$ and $V$, respectively.
To present $J(v)$ in the form (\ref{eq:Jdef:gen}), we set
\[
F(v) := \tfrac{1}{2} \| v \|_{\mathcal B}^2 - \langle \ell, v \rangle_V
\quad {\rm and} \quad
G(y) := \tfrac{1}{2} \| y \|_{\mathcal A}^2 .
\]
The corresponding conjugate functionals are
\[
F^*(v^*) = \tfrac{1}{2} \| v^*+\ell \|_{\mathcal B^{-1}}^2
\quad {\rm and} \quad
G^*(y^*) = \tfrac{1}{2} \| y^* \|_{\mathcal A^{-1}}^2 ,
\]
where
\[
\| y \|_{\mathcal A^{-1}} := \sqrt{(\mathcal A^{-1} y, y)_Y}
\quad {\rm and} \quad
\| v \|_{\mathcal B^{-1}} := \sqrt{(\mathcal B^{-1} v, v)_V} .
\]
\begin{Remark}
Formally, one should write
\[
F^*(v^*) = \tfrac{1}{2} \| \mathcal I_V^{-1}  (v^*+\ell) \|_{\mathcal B^{-1}}^2
\quad
{\rm and}
\quad
G^*(y^*) = \tfrac{1}{2} \| \mathcal I_Y^{-1}  y^* \|_{\mathcal A^{-1}}^2,
\]
where $\mathcal I_V$ and $\mathcal I_Y$ denote canonical isomorphisms
(mapping of the Riesz representation
theorem for Hilbert spaces) $V \rightarrow V^*$ and $Y \rightarrow Y^*$.
Hereafter, we omit this detail  to lighten the notation.
\end{Remark}
The compound functionals associated with relations
(\ref{eq:sub1}) and (\ref{eq:sub2}) are
\begin{multline} \label{eq:linell:Df}
D_F(v,\Lambda^* y^*) = \tfrac{1}{2} \| v \|_{\mathcal B}^2
- \langle \ell, v \rangle_V
+ \tfrac{1}{2} \| \Lambda^* y^*+\ell \|_{\mathcal B^{-1}}^2
- \langle v , \Lambda^* y^* \rangle_V \\
= \tfrac{1}{2} \| \mathcal B v - \ell - \Lambda^* y^* \|_{\mathcal B^{-1}}^2
\end{multline}
and
\begin{multline} \label{eq:linell:Dg}
D_G(\Lambda v, -y^*) = \tfrac{1}{2} \| \Lambda v \|_{\mathcal A}^2
+ \tfrac{1}{2} \|   y^* \|_{\mathcal A^{-1}}^2
- \langle \Lambda v , -y^* \rangle_Y \\
= \tfrac{1}{2} \| \mathcal A \Lambda v +   y^* \|_{\mathcal A^{-1}}^2 ,
\end{multline}
respectively.
The fact that the solution $(u,p^*)$ satisfies
$D_F(u,\Lambda^* p^*) = 0$ and \linebreak $D_G(\Lambda u, -p^*) = 0$ leads to the
decomposed form of (\ref{eq:ex:linlow})
\begin{align}
\label{eq:ex:linlow:equi}
- \Lambda p^* + \mathcal B u & = \ell , \\
\label{eq:ex:linlow:dual}
p & = - \mathcal A \Lambda u .
\end{align}

\begin{Remark}
The compound functionals are squared (weighted)
$L^2$-norms of the residuals related to the mixed form
of the problem. Minimizing the duality gap w.r.t. primal and dual
variables over some finite dimensional subspaces leads to, e.g.,
least square finite element methods
(see, e.g., \cite{BochevGunzburger2009} and references therein).
\end{Remark}

\begin{Remark}
Substituting conditions (\ref{eq:ex:linlow:equi}) and
(\ref{eq:ex:linlow:dual}) to $D_F(v,\Lambda^* y^*)$
and $D_G(v,\Lambda^* y^*)$, respectively, yields
\[
D_F(v,\Lambda^* y^*) =
\tfrac{1}{2} \| \mathcal B(v - u) +  \Lambda^* (p^* - y^*) \|_{\mathcal B^{-1}}^2
\]
and
\[
D_G(\Lambda v,-y^*) =
\tfrac{1}{2} \| \mathcal A \Lambda(v - u) + y^* - p^* \|_{\mathcal A^{-1}}^2 .
\]
Expanding the terms and summing provides an alternative representation for
the duality gap (cross-terms cancel each other due to
(\ref{eq:adj}) and (\ref{eq:ex:linlow:dual}))
\begin{multline*} 
E(v,y^*) =
\tfrac{1}{2} \| v - u \|_{\mathcal B}^2
+ \tfrac{1}{2} \|   \Lambda^* (p^* - y^*) \|_{\mathcal B^{-1}}^2
+ \tfrac{1}{2} \| \Lambda(v - u) \|_{\mathcal A}^2
+ \tfrac{1}{2} \| y^*-p^* \|_{\mathcal A^{-1}}^2 .
\end{multline*}
By (\ref{eq:gapdef}), (\ref{eq:linell:Df}), (\ref{eq:linell:Dg}),
and (\ref{eq:ex:linlow:dual}) one obtains the so called
error equality
\begin{multline*}
\| v - u \|_{\mathcal B}^2
+
\|   \Lambda^* (p^* - y^*) \|_{\mathcal B^{-1}}^2
+
\| \Lambda(v - u) \|_{\mathcal A}^2
+
\| y^*-p^* \|_{\mathcal A^{-1}}^2 \\
=
\| \mathcal B v - \ell - \Lambda^* y^* \|_{\mathcal B^{-1}}^2 +
\| \mathcal A \Lambda v +   y^* \|_{\mathcal A^{-1}}^2 ,
\end{multline*}
which has importance in a posteriori error control and
adaptive finite element methods (see
\cite{CaiZhang2010,AnjamPauly2014}).
\end{Remark}

If $\mathcal B = 0$, then (\ref{eq:ex:linlow}) reduces to
\begin{equation} \label{eq:lin:nolow}
\Lambda^* \mathcal A \Lambda u = \ell .
\end{equation}
The primal energy functional is
\[
J(v) = \tfrac{1}{2} \| \Lambda v \|_{\mathcal A}^2 - \langle \ell, v \rangle_V ,
\]
where
\[
F(v) := - \langle \ell, v \rangle_V
\quad {\rm and} \quad
G(y) := \tfrac{1}{2} \| y \|_{\mathcal A}^2.
\]
The corresponding conjugate functionals are
\[
F^*(v^*) = \left\{ \begin{array}{ll}
0, & \quad {\rm if} \; v^* + \ell = 0, \\
+\infty, & \quad {\rm else} ,
\end{array}
\right.
\quad {\rm and} \quad
G^*(y^*) := \tfrac{1}{2} \|   y^* \|_{\mathcal A^{-1}}^2 .
\]
The respective compound functionals are
\[
D_F(v,\Lambda^* y^*) =
\left\{ \begin{array}{ll}
0 , & \quad {\rm if} \; \Lambda^* y^* + \ell =0, \\
+\infty, & \quad {\rm else}
\end{array}
\right.
\]
and
\[
D_G^*(\Lambda v, -y^*) := \tfrac{1}{2} \| \Lambda v \|_{\mathcal A}^2 +
\tfrac{1}{2} \|   y^* \|_{\mathcal A^{-1}}^2
+ \langle y^* ,\Lambda v  \rangle_Y
= \tfrac{1}{2} \| \mathcal A \Lambda v +   y^* \|_{\mathcal A^{-1}}^2 ,
\]
which clearly correspond to the equilibrium condition
$
\Lambda^* p^* + \ell = 0
$
and the duality condition
$
p^* + \mathcal A \Lambda u = 0
$,
respectively.

\begin{Remark}
Assume that $y^*$ is equilibrated, i.e., $\Lambda^* y^* + \ell = 0$. Then,
the duality gap is simply $D_G^*(\Lambda v, -y^*)$.
On the other hand, the duality condition for $(u,p^*)$ provides (again, by (\ref{eq:adj}))
\begin{multline*}
\| \mathcal A \Lambda v -   y^* \|_{\mathcal A^{-1}}^2
=
\| \mathcal A \Lambda v - \mathcal A \Lambda u + p^* - y^* \|_{\mathcal A^{-1}}^2 \\
=
\| \Lambda (v - u ) \|_{\mathcal A}^2 + \| p^* - y^* \|_{\mathcal A^{-1}}^2
+ 2 \langle \Lambda (v - u ) , p^* - y^* \rangle_Y ,
\end{multline*}
where the last term vanishes, since $y^*$ is equilibrated. This leads to the well known
Prager-Synge estimate \cite{PragerSynge1947} (for equilibrated fluxes)
\[
\| \Lambda (v - u ) \|_{\mathcal A}^2 + \| p^* - y^* \|_{\mathcal A^{-1}}^2
=
\| \mathcal A \Lambda v -   y^* \|_{\mathcal A^{-1}}^2 .
\]
\end{Remark}

\subsubsection{The power growth functional with lower order terms}
\label{se:pow}

Let
$V = W_0^{1,p'}(\Omega,\mathbb{R})$, $V^* = W^{-1,q'}(\Omega,\mathbb{R})$,
$Y=L^p(\Omega,\mathbb{R}^d)$, $Y^*=L^q(\Omega,\mathbb{R}^d)$,
$\Lambda = \nabla$, and $\Lambda^* = -\dvg$. Let $p' > 1$ and $p > 1$,
$q'$ and $q$ are the conjugate numbers defined by
$\tfrac{1}{p'}+\tfrac{1}{q'}=1$ and $\tfrac{1}{p}+\tfrac{1}{q}=1$, respectively.
The domain $\Omega \subset \mathbb{R}^d$ is open, bounded and has a piecewise Lipschitz continuous boundary.
Moreover, let $p \leq p'$, then $L^{p'}(\Omega,\mathbb{R}) \subset L^{p}(\Omega,\mathbb{R})$ (see, e.g.,
\cite[Th. 2]{Villani1985}) and
$W_0^{1,p'}(\Omega,\mathbb{R}) \subset L^{p}(\Omega,\mathbb{R})$.
This example concerns a $p$-Poisson problem with lower order terms, i.e.,
\begin{align*}
-\dvg \left( \gamma' | \nabla u|^{p'-2} \nabla u \right)
+ \gamma |u|^{p-2} u & = f ,
\quad \textrm{in } \Omega \\
u & = 0 ,
\quad \textrm{on } \partial \Omega .
\end{align*}
These Euler's equations are related to the primal energy functional
\begin{equation} \label{eq:pow:J}
J(v) = \IntO \left( \tfrac{1}{p'} \gamma' | \nabla v |^{p'} +
\tfrac{1}{p} \gamma |v|^p -  fv \right) \dx ,
\end{equation}
where $f \in L^{q}(\Omega,\mathbb{R})$ and $\gamma, \gamma' \in L^\infty(\Omega,\mathbb{R}_{+})$. Here,
\[
F(v) =  \IntO \left( \tfrac{1}{p} \gamma |v|^p - fv \right) \dx
\quad \textrm{and} \quad
G(\mathbf{y}) = \tfrac{1}{p'} \IntO  \gamma' |\mathbf{y}|^{p'}  \dx .
\]
The corresponding conjugates are
\[
F^*(v^*) = \tfrac{1}{q} \IntO \left( \tfrac{1}{\gamma} \right)^{q-1} |v^* + f|^q \dx
\quad \textrm{and} \quad
G^*(\mathbf{y}^*) = \tfrac{1}{q'} \IntO \left( \tfrac{1}{\gamma'} \right)^{q'-1}
|\mathbf{y}^*|^{q'} \dx ,
\]
and the compound functionals are (assume henceforth additional regularity
$\dvg \mathbf{y}^* \in L^q(\Omega,\mathbb{R})$).
\[
D_F(v,-\dvg \mathbf{y}^*) =
\IntO \left(
\tfrac{1}{p} \gamma |v|^p +
\tfrac{1}{q} \left( \tfrac{1}{\gamma} \right)^{q-1} |-\dvg \mathbf{y}^* + f|^q -
(-\dvg \mathbf{y}^* + f) v  \right) \dx
\]
and
\[
D_G(\nabla v, -\mathbf{y}^*) =
\IntO \left(
\tfrac{1}{p'} \gamma' |\nabla v|^{p'} +
\tfrac{1}{q'} \left( \tfrac{1}{\gamma'} \right)^{q'-1} | \mathbf{y}^* |^{q'} -
\nabla v \cdot \left(-\mathbf{y}^*\right) \right) \dx .
\]

\begin{Remark}
Note that in the case $p=p'=2$ (then also $q=q'=2$)
\[
D_F(v,-\dvg \mathbf{y}^*)  = \tfrac{1}{2} \IntO \tfrac{1}{\gamma}
\left( \gamma v + \dvg \mathbf{y}^* - f \right)^2 \dx
\]
and
\[
D_G(\nabla v, -\mathbf{y}^*) = \tfrac{1}{2} \IntO \tfrac{1}{\gamma'}
\left( \gamma' \nabla v + \mathbf{y}^* \right)^2 \dx .
\]
Assuming enough regularity, conditions \mbox{$D_F(u,-\dvg \mathbf{p}^*)=0$}
and \mbox{$D_G(\nabla u, -\mathbf{p}^*)=0$}
lead to
\begin{alignat*}{2}
\dvg \mathbf{p}^* + \gamma u & = f , && \quad \textrm{in } \Omega, \\
\mathbf{p}^* & = - \gamma' \nabla u , && \quad \textrm{in } \Omega, \\
u & = 0 , && \quad \textrm{on } \partial \Omega .
\end{alignat*}
\end{Remark}

\section{Incompletely known data}
\label{se:ind}

\subsection{Worst case scenario approach}
\label{se:wc}

In many mathematical models
functionals $F$ and/or $G$ depend
on some incompletely known data or parameters. Henceforth the set of these
parameters is denoted by
$\idata$.
Motivated by the engineering practise, we assume that the set of admissible data is
of the form
\begin{equation*} 
\idata := \{ \idatum = \idatum_\circ + \delta ,
\; \| \delta \|_\idata \leq \varepsilon \},
\end{equation*}
where $\idatum_\circ$ are the known ``mean data'' and $\delta$ are the respective
(bounded by some suitable norm $\| \cdot \|_\idata$) variations resulting, e.g.,
from the measurement inaccuracy,
incomplete known material parameters, etc.
It is assumed that problems (\ref{eq:Jdef:gen}) and
(\ref{eq:Idef:gen}) have solutions for all admissible data. This relation is denoted by
\[
\solmap : \idata \rightarrow \solmap(\idata) \subset V \times Y^* .
\]

Note that every $\idatum \in \idata$ generates not only a problem and the
associated solution $\solmap(\idatum)$, but also the proper error measure
(i.e., the duality gap)
$E_\idatum$ (subscript denotes the dependence of functionals on the parameter $\idatum$)
associated with that problem.
The fact that the error measures are not uniquely
defined stimulates the following definitions of
extremal errors associated with an arbitrary \mbox{$(v,y^*) \in V \times Y^*$},
\begin{equation*}
\Eworst(v,y^*) := \sup\limits_{\idatum \in \idata} E_\idatum(v,y^*)
\end{equation*}
and
\begin{equation*}
\Ebest(v,y^*) := \inf\limits_{\idatum \in \idata} E_\idatum(v,y^*) .
\end{equation*}

\begin{Remark}
Recall that by (\ref{eq:gapdef}),
the duality gap is a sum of compound functionals
\[
E_\idatum(v,y^*) = D_{F_\idatum}(v,\Lambda^* y^*) + D_{G_\idatum}(\Lambda v, -y^*) .
\]
Since compound functionals vanish at the exact solution (and are positive otherwise), they
could have been used independently to define similar worst (and best)
case errors.
\end{Remark}

Due to the computational cost, typically
simulations are performed using only the mean data values.
Thus the objective of computations is to approximate  the mean solution
$(u_\circ,p^*_\circ) := \solmap(\idatum_\circ)$.
In the context of the mixed setting discussed in this paper, the approximation
is ``good'' if $E_{\idatum_\circ}(v,y^*)$ is small. (Hereafter, we denote
$E_{\idatum_\circ}$ by $E_\circ$ to avoid double subscripts.)
However, if the data is not completely known, then there is an accuracy limit
beyond which further computations (e.g., adaptive mesh refinements) make no
sense. The following analysis provides tools to compare between the
error resulting from the inaccuracy of the approximation and the error
generated by the data uncertainty.

Assume that it is possible to explicitly linearize the duality gap functional
with respect to the data perturbation
\begin{equation} \label{eq:Eexp}
E_\idatum(v,y^*) = E_\circ(v,y^*) + R(v,y^*;\idatum_\circ,\delta) ,
\end{equation}
where $R$ is the remainder term related to
the particular mean data $\idatum_\circ$ and perturbation $\delta$.
Then $\Eworst$ can be decomposed by (\ref{eq:Eexp}) into two parts
\begin{equation} \label{eq:Ehcomp}
\Eworst(v,y^*) =
E_\circ(v,y^*)
 +
\Eind(v,y^*),
\end{equation}
where
\begin{equation} \label{eq:Eind:def}
\Eind(v,y^*) := \sup\limits_{\| \delta \|_\idata \leq \varepsilon}
R(v,y^*;\idatum_\circ,\delta) .
\end{equation}
In (\ref{eq:Ehcomp}), $E_\circ(v,y^*)$ is the approximation error and
$\Eind(v,y^*)$ is the error generated by the incompletely known data.
The main task is to compute (or estimate) the respective supremum in (\ref{eq:Eind:def}).
The difficulty of this task depends on the problem and the set of possible
data perturbations.

The described error decomposition is schematically illustrated for a case where
$E_\circ$ dominates $\Eind$ and
vice versa in Fig. \ref{fig:Ecomp}. In Fig. \ref{fig:Ecomp} (top), the approximation
$(v,y^*)$ is so far from the exact mean solution $(u_\circ,p_\circ^*)$
that all admissible error measures yield relatively similar value. In this case, it would be
reasonable to improve the approximation, e.g., by refining the mesh. In Fig. \ref{fig:Ecomp}
(bottom), $(v,y^*)$ is so close to $(u_\circ,p_\circ^*)$ (and the entire set
$\solmap(\idata)$) that different admissible duality gap functionals provide
very different values. This means that the uncertainty in the data dominates and reducing the
error w.r.t. one error measure could result in increasing it in another. Since all
error measures are equally possible, improving the approximation is impossible due to
the accuracy limit.

For numerical analyst,
the decomposition (\ref{eq:Ehcomp}) gives means to compare errors generated by different
sources. The comparison between different error sources can be done, e.g., by
computing ratios
\begin{equation} \label{eq:ratios}
\rho^{\rm approx} := \frac{E_\circ}{\Eworst}
\quad \textrm{and} \quad
\rho^{\rm indet} := \frac{\Eind}{\Eworst} .
\end{equation}

%
%
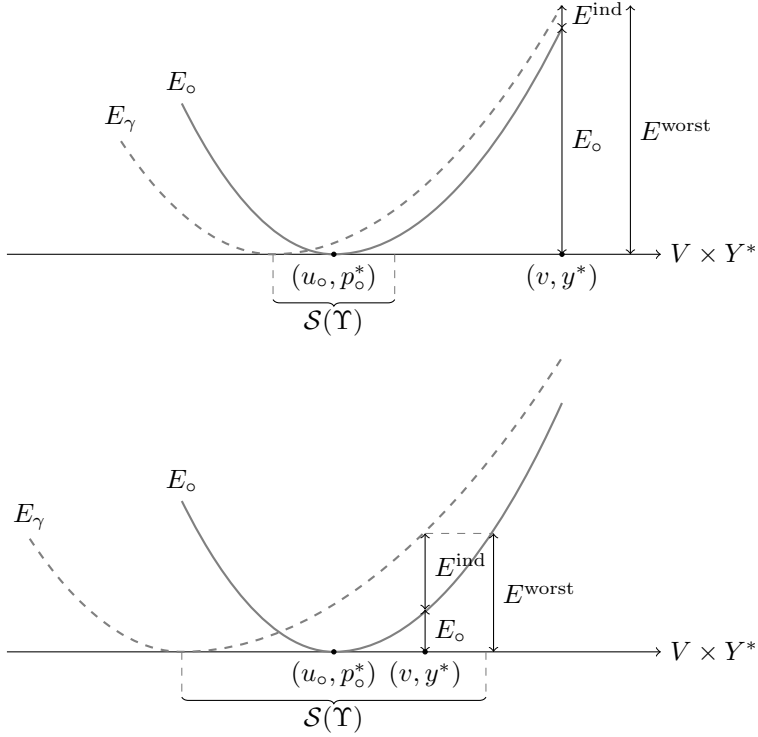
\begin{figure}[tb]
\begin{center}
\begin{tikzpicture}[scale=1]
\def\axlen{4.3};

\def\lowker{0.6};
\def\lowkeri{0.9};

\def\solset{0.8};

\def\Eoleft{2.0};
\def\Eoright{3.0};
\def\Eolefth{2.0};
\def\Eorighth{3.0};

\def\Eileft{2.0-\solset};
\def\Eiright{3.0};
\def\Eilefth{1.5};
\def\Eirighth{3.3};

\def\Eiileft{1.0+\solset};
\def\Eiiright{1.8+\solset};
\def\Eiilefth{1.3};
\def\Eiirighth{1.8};

\def\app{3.0};

\draw[thin, ->] (-\axlen,0)--(\axlen,0);
\draw (\axlen,0) node[anchor=west] {$V \times Y^*$};

\draw[thick,gray=80!] (-\Eoleft, \Eolefth) parabola bend (0,0) (\Eoright, \Eorighth);
\draw (-\Eoleft, \Eolefth) node[anchor=south] {$E_\circ$};

\draw[thick,dashed,gray=90!]
(-\Eileft, \Eilefth) parabola bend (-\solset,0) (\Eiright, \Eirighth);
\draw (-\Eileft, \Eilefth) node[anchor=south] {$E_\idatum$};


\draw[fill=black] (0,0) circle (0.8pt);
\draw (0,0) node[anchor=north] {$(u_\circ,p_\circ^*)$};

\draw[thin, dashed, gray=90!] (-\solset,0) -- (-\solset,-\lowker);
\draw[thin, dashed, gray=90!] (\solset,0) -- (\solset,-\lowker);
\draw[snake=brace, mirror snake] (-\solset,-\lowker) -- (\solset,-\lowker);
\draw (0,-\lowker) node[anchor=north] {$\solmap(\idata)$};


\draw[fill=black] (\app,0) circle (0.8pt);
\draw (\app,0) node[anchor=north] {$(v,y^*)$};

\draw[thin, <->] (\app,0) -- (\app,\Eorighth);
\draw[thin, <->] (\app,\Eorighth) -- (\app,\Eirighth);
\draw (\app,\Eorighth/2) node[anchor=west] {$E_\circ$};
\draw (\app,0.2+\Eorighth) node[anchor=west] {$\Eind$};

%
\draw[thin, <->] (\app+\lowkeri,0) -- (\app+\lowkeri,\Eirighth);
\draw (\app+\lowkeri,\Eirighth/2) node[anchor=west] {$\Eworst$};

%
%
\begin{scope}[yshift=-150]
\def\axlen{4.3};

\def\lowker{0.6};
\def\lowkeri{0.9};

\def\solset{2.0};

\def\Eoleft{2.0};
\def\Eoright{3.0};
\def\Eolefth{2.0};
\def\Eorighth{3.3};

\def\Eileft{2.0-\solset};
\def\Eiright{3.0};
\def\Eilefth{1.5};
\def\Eirighth{3.9};

\def\Eiileft{1.0+\solset};
\def\Eiiright{1.8+\solset};
\def\Eiilefth{1.3};
\def\Eiirighth{1.8};

\def\app{1.2};

\draw[thin, ->] (-\axlen,0)--(\axlen,0);
\draw (\axlen,0) node[anchor=west] {$V \times Y^*$};

\draw[thick,gray=80!] (-\Eoleft, \Eolefth) parabola bend (0,0) (\Eoright, \Eorighth);
\draw (-\Eoleft, \Eolefth) node[anchor=south] {$E_\circ$};

\draw[thick,dashed,gray=90!]
(-\Eileft, \Eilefth) parabola bend (-\solset,0) (\Eiright, \Eirighth);
\draw (-\Eileft, \Eilefth) node[anchor=south] {$E_\idatum$};

\draw[fill=black] (0,0) circle (0.8pt);
\draw (0,0) node[anchor=north] {$(u_\circ,p_\circ^*)$};

\draw[thin, dashed, gray=90!] (-\solset,0) -- (-\solset,-\lowker);
\draw[thin, dashed, gray=90!] (\solset,0) -- (\solset,-\lowker);
\draw[snake=brace, mirror snake] (-\solset,-\lowker) -- (\solset,-\lowker);
\draw (0,-\lowker) node[anchor=north] {$\solmap(\idata)$};

%
\def\Eindi{1.57}
\def\Eindii{0.56};

\draw[fill=black] (\app,0) circle (0.8pt);
\draw (\app,0) node[anchor=north] {$(v,y^*)$};

\draw[thin, <->] (\app,0) -- (\app,\Eindii);
\draw[thin, <->] (\app,\Eindii) -- (\app,\Eindi);
\draw (\app,\Eindii/2) node[anchor=west] {$E_\circ$};
\draw (\app,0.6+\Eindii) node[anchor=west] {$\Eind$};

%
\draw[thin, dashed, gray=90!] (\app,\Eindi)-- (\app+\lowkeri,\Eindi);
\draw[thin, <->] (\app+\lowkeri,0) -- (\app+\lowkeri,\Eindi);
\draw (\app+\lowkeri,\Eindi/2) node[anchor=west] {$\Eworst$};
\end{scope}

\end{tikzpicture}
\caption{Error decomposition for a coarse (top) and an accurate approximation (bottom).}
\label{fig:Ecomp}
\end{center}
\end{figure}
%

\begin{Remark}
In simulation practise, $\Ebest$ is typically of lesser importance than $\Eworst$.
For the sake of completeness, we apply the decomposition (\ref{eq:Ehcomp}) to $\Ebest$ and
obtain (analogously to (\ref{eq:Eexp}) and (\ref{eq:Eind:def}))
\[
\Ebest(v,y^*) = E_\circ(v,y^*) + \Ecom(v,y^*),
\]
where
\[
\Ecom(v,y^*) := \inf\limits_{\| \delta \|_\idata \leq \varepsilon}
R(v,y^*;\idatum_\circ,\delta)
\]
is an error reduction term. It adjusts the value of the approximation
error in such a way that
the value of the error becomes the most
favorable one (among the possible error measures).
Analogously $E_\circ$
and $\Eind$, the comparison between $E_\circ$
and $\Ecom$ also provides information on how close the approximation is to the solution
set. The approximation is coarse, if $E_\circ \gg |\Ecom|$ and it is accurate (close
to the solution set), if $E_\circ \approx - \Ecom$.
\end{Remark}

\begin{Remark} \label{re:diam}
Note that $\Eworst$ is always strictly positive, if the set of admissible data is greater
than a single value. On the other hand, $\Ebest(v,y^*)$ is zero if (and only if)
$(v,y^*)$ belongs to the solution set, i.e.,
$(v,y^*) \in \solmap(\idata)$. In a certain sense,
$\Eworst$ and $\Ebest(v,y^*)$ characterize ``the diameter of the solution set''
\[
{\rm diam}_{(v,y^*)}(\solmap(\idata)) := \Eworst(v,y^*) - \Ebest(v,y^*)
\]
measured ``from the direction of $(v,y^*)$''. In the special case
$(v,y^*) \in \solmap(\idata)$,
${\rm diam}_{(v,y^*)}(\solmap(\idata))$ resembles more ``a radius of the solution set''.
\end{Remark}

\begin{Remark}
In contrast to Remark \ref{re:diam}, it is clear that a real characteristic
measure of the solution set should be
independent of any approximation. It should depend only on the problem data and
the data indeterminacy. There are two natural alternatives for such a measure,
\[
\Eworst(u_\circ,p_\circ^*)
\quad
\textrm{and}
\quad
\sup\limits_{\idatum \in \idata} E_\circ(\solmap(\idatum)) .
\]
These two quantities answer to questions
``How badly our mean solution may
violate the necessary conditions related to the family of admissible problems?'' and
``How badly an arbitrary admissible solution may violate the
necessary conditions of the mean problem?'', respectively.
In linear problems, the duality gap is associated with the mixed norm
(see Sect. \ref{se:lin}) and the latter quantity has been successfully
estimated in \cite{MaliRepin2015} for problems of the form (\ref{eq:lin:nolow}).

\end{Remark}

%

\subsection{Nonlinear reaction diffusion problem}
\label{se:ex:wc}

Consider the problem defined in
Sect. \ref{se:pow}, where $p'=2$, $2>p>1$, and  $\gamma'=1$. Then, $V = H^1_0(\Omega)$,
\mbox{$V^*=H^{-1}(\Omega)$}, and
\[
J(v) = \IntO \left( \tfrac{1}{2} | \nabla v |^{2} +
\tfrac{1}{p} \gamma |v|^p -  fv \right) \dx .
\]
If the solution is regular enough, then the problem can be written as a stationary
reaction diffusion problem with a nonlinear reaction term.
\begin{align}
\label{eq:pow:nec}
- \Delta u + \gamma |u|^{p-2} u & = f, \quad {\rm in} \; \Omega , \\
\nonumber
            u & = 0, \quad {\rm on} \; \partial \Omega .
\end{align}

Assume that $\gamma$ is not fully known, but belongs to a set
\begin{equation} \label{eq:datadef}
\gamma \in \idata := \{ \gamma \in L^\infty(\Omega,\mathbb{R}) \, | \,
\gamma = \gamma_\circ + \delta \} ,
\end{equation}
where $\gamma_\circ \in L^\infty(\Omega,\mathbb{R}_{+})$ is the known ``mean value'' and
$\delta \in L^\infty(\Omega,\mathbb{R})$ is the perturbation.
The condition
\begin{equation} \label{eq:deltabound}
\| \delta \|_{L^\infty(\Omega,\mathbb{R})} \leq \varepsilon <
 \min\limits_{\mathbf{x} \in \Omega} \gamma_\circ(\mathbf{x}) .
\end{equation}
guarantees that all elements of $\idata$ are strictly positive.
The compound functionals (generating the respective duality gap by (\ref{eq:gapdef})) for the problem are
\begin{equation} \label{eq:Dfpow}
D_F(v,-\dvg \mathbf{y}^*) =
\IntO \left(
\tfrac{1}{p} \gamma |v|^p +
\tfrac{1}{q} \left( \tfrac{1}{\gamma} \right)^{q-1} \! \! |- \dvg \mathbf{y}^* + f|^q -
(- \dvg \mathbf{y}^* + f) v  \right) \dx
\end{equation}
and
\[
D_G(\nabla v, -\mathbf{y}^*) = \tfrac{1}{2} \| \nabla v + \mathbf{y}^* \|_{L^2(\Omega,\mathbb{R}^d)}^2 .
\]
The following Theorem provides two-sided computable bounds for the indeterminacy related
error.
\begin{Theorem} \label{th:bounds}
Let $\Eind(v,\mathbf{y}^*)$ be as in (\ref{eq:Ehcomp}),
where $\idatum$ is defined by (\ref{eq:datadef}) and
(\ref{eq:deltabound}), and
$(v,\mathbf{y}^*) \in H^1_0(\Omega) \times Q$, where
\[
Q := \{ \boldsymbol{\eta}^* \in L^q(\Omega,\mathbb{R}^d)
\, | \, \dvg \boldsymbol{\eta}^* \in L^q(\Omega,\mathbb{R}) \} .
\]
Then,
\[
\underline{\Eind}(v,\mathbf{y}^*) \leq \Eind(v,\mathbf{y}^*) \leq
\overline{\Eind}(v,\mathbf{y}^*) ,
\]
where
\begin{align}
\label{eq:th1:low}
\underline{\Eind}(v,\mathbf{y}^*) & :=
\IntO \max\{ h(-\varepsilon,v,\mathbf{y}^*,\varepsilon) , h(\varepsilon,v,\mathbf{y}^*,\varepsilon) \} \dx , \\
\label{eq:th1:up}
\overline{\Eind}(v,\mathbf{y}^*) & :=
\IntO \max\{ h(-\varepsilon,v,\mathbf{y}^*,-\varepsilon) , h(\varepsilon,v,\mathbf{y}^*,-\varepsilon) \} \} \dx ,
\end{align}
and
\begin{multline*}
h(\delta,v,\mathbf{y}^*,c) :=
    \delta \left(
        \tfrac{1}{p} |v|^p + (1-q) \gamma_\circ^{-q}\tfrac{1}{q} |-\dvg \mathbf{y}^* + f|^q
    \right )   \\
+
    \delta^2 \left(
        \tfrac{1}{2} (q-1)(\gamma_\circ+c)^{-q-1} |-\dvg \mathbf{y}^* + f|^q
    \right) .
\end{multline*}
\end{Theorem}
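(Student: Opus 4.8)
The plan is to exploit the fact that, for this problem, only the compound functional $D_F$ depends on $\gamma$: since $D_G(\nabla v,-\mathbf{y}^*)=\tfrac{1}{2}\|\nabla v+\mathbf{y}^*\|_{L^2(\Omega,\mathbb{R}^d)}^2$ carries no $\gamma$, the remainder in (\ref{eq:Eexp}) is exactly $R=D_F^{\gamma}-D_F^{\gamma_\circ}$. Substituting $\gamma=\gamma_\circ+\delta$ into (\ref{eq:Dfpow}), the $\gamma$-independent term $(-\dvg\mathbf{y}^*+f)v$ cancels, and $R$ reduces to the integral of a function that at almost every $\mathbf{x}$ depends only on the pointwise value $\delta(\mathbf{x})$:
\[
R=\IntO\phi(\delta)\dx,\qquad
\phi(s):=\tfrac{1}{p}s\,|v|^p+\tfrac{1}{q}\bigl(g(\gamma_\circ+s)-g(\gamma_\circ)\bigr)\,|-\dvg\mathbf{y}^*+f|^q,
\]
where $g(t):=t^{1-q}$. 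Because the admissible set (\ref{eq:datadef})--(\ref{eq:deltabound}) constrains $\delta$ solely through $\|\delta\|_{L^\infty(\Omega,\mathbb{R})}\le\varepsilon$, i.e. $|\delta(\mathbf{x})|\le\varepsilon$ a.e., the supremum in (\ref{eq:Eind:def}) decouples across $\mathbf{x}$ and the whole task becomes the pointwise optimization of $\phi$ over $s\in[-\varepsilon,\varepsilon]$.

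Next I would make the quadratic structure explicit via Taylor's theorem with Lagrange remainder. Since $\gamma_\circ\pm\varepsilon>0$ by (\ref{eq:deltabound}), $g$ is smooth on the relevant interval with $g'(t)=(1-q)t^{-q}$ and $g''(t)=q(q-1)t^{-q-1}$. Expanding about $\gamma_\circ$ yields, for each $\mathbf{x}$ and each admissible $s$, a point $c=c(\mathbf{x},s)$ lying between $0$ and $s$ (so $c\in[-\varepsilon,\varepsilon]$) such that $\phi(s)=h(s,v,\mathbf{y}^*,c)$, with $h$ exactly the function in the statement: the linear coefficient comes from $\tfrac{1}{q}g'(\gamma_\circ)=(1-q)\gamma_\circ^{-q}\tfrac1q$, and the quadratic coefficient $\tfrac{1}{2}(q-1)(\gamma_\circ+c)^{-q-1}|-\dvg\mathbf{y}^*+f|^q$ comes from $\tfrac{1}{q}\cdot\tfrac{1}{2}g''(\gamma_\circ+c)$.

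The two-sided bound then follows from two monotonicity observations. First, the quadratic coefficient is nonnegative and, since $q+1>0$, the factor $(\gamma_\circ+c)^{-q-1}$ is strictly decreasing in $c$; using $c\in[-\varepsilon,\varepsilon]$ and $s^2\ge0$ gives the sandwich
\[
h(s,v,\mathbf{y}^*,\varepsilon)\le\phi(s)=h(s,v,\mathbf{y}^*,c)\le h(s,v,\mathbf{y}^*,-\varepsilon)
\]
for every admissible $s$. Second, for fixed $c$ the map $s\mapsto h(s,v,\mathbf{y}^*,c)$ is a convex quadratic, so its maximum over $[-\varepsilon,\varepsilon]$ is attained at an endpoint and equals $\max\{h(-\varepsilon,v,\mathbf{y}^*,c),h(\varepsilon,v,\mathbf{y}^*,c)\}$. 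For the upper bound I use the right inequality with $c=-\varepsilon$ and the elementary estimate $\sup_\delta\IntO h(\delta,v,\mathbf{y}^*,-\varepsilon)\dx\le\IntO\sup_{|s|\le\varepsilon}h(s,v,\mathbf{y}^*,-\varepsilon)\dx$, which produces $\overline{\Eind}$ in (\ref{eq:th1:up}). For the lower bound I use the left inequality with $c=\varepsilon$ and test with the admissible perturbation $\delta(\mathbf{x})\in\{-\varepsilon,\varepsilon\}$ chosen pointwise to maximize $h(\cdot,v,\mathbf{y}^*,\varepsilon)$, which yields $\underline{\Eind}$ in (\ref{eq:th1:low}).

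The main technical point I expect is the interchange of the supremum over the $L^\infty$-ball with the integral. For the upper bound this is only the trivial direction (supremum of an integral not exceeding the integral of the pointwise supremum), so no care is needed. The delicate part is the lower bound, where I must exhibit a genuinely admissible, \emph{measurable} maximizing perturbation: since the integrand is decoupled across $\mathbf{x}$ and continuous in $s$ on the compact interval $[-\varepsilon,\varepsilon]$, a measurable selection of the endpoint $\delta(\mathbf{x})=\pm\varepsilon$ realizing $\max\{h(-\varepsilon,\cdot,\varepsilon),h(\varepsilon,\cdot,\varepsilon)\}$ exists, so this $\delta$ lies in the admissible set and attains the claimed bound. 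Checking that the Lagrange point $c(\mathbf{x},s)$ can be taken measurably and that all bases $\gamma_\circ+c$ remain positive (guaranteed by (\ref{eq:deltabound})) completes the rigor; the remaining work is the routine differentiation of $g$ and the collection of terms into $h$.
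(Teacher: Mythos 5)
Your proposal is correct and follows essentially the same route as the paper's proof: a Taylor expansion of $(\gamma_\circ+\delta)^{1-q}$ about $\gamma_\circ$ with Lagrange remainder, pointwise endpoint maximization of the resulting convex (upwards-opening) parabola in $\delta$, and the monotonicity of $c \mapsto (\gamma_\circ+c)^{-q-1}$ to bracket the remainder coefficient by its values at $c=\pm\varepsilon$. The only difference is one of rigor, not of method: you spell out the sup--integral interchange and the measurable selection of the endpoint-valued maximizing perturbation for the lower bound, details the paper compresses into the remark that the estimation ``can be done pointwise.''
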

\begin{proof}
The decomposition (\ref{eq:Eexp}) is derived by linearizing
the (nonlinear) term containing $\gamma$ in (\ref{eq:Dfpow}).
More precisely, the (first order) McLaurin series of
\mbox{$
z(\delta) := (\gamma_\circ+\delta)^{1-q}
$}
is of interest, i.e.,
\begin{align}
\nonumber
z(\delta) & = z(0) + z'(0) \delta + \tfrac{1}{2} z''(c) \delta^2 \\
\label{eq:th1:1}
\left( \gamma_\circ+\delta \right)^{1-q} & =
\gamma_\circ^{1-q} +
(1-q) \gamma_\circ^{-q} \delta -
\tfrac{1}{2} q(1-q)(\gamma_\circ+c)^{-q-1} \delta^2 ,
\end{align}
where $c \in (0,\delta)$ in the remainder term. Applying
(\ref{eq:th1:1}) into (\ref{eq:Dfpow})
($D_G$ does not depend on $\gamma$) leads to
\[
E_\gamma(v,\mathbf{y}^*) = E_\circ(v,\mathbf{y}^*) + R(v,\mathbf{y}^*;\gamma_\circ, \delta) ,
\]
where
\begin{multline*} 
R(v,\mathbf{y}^*;\gamma_\circ, \delta) :=
\IntO \Big(
    \delta \left(
        \tfrac{1}{p} |v|^p + (1-q) \gamma_\circ^{-q}\tfrac{1}{q} |-\dvg \mathbf{y}^* + f|^q
    \right )  \\
-
     \delta^2
        \tfrac{1}{2} (1-q)(\gamma_\circ+c)^{-q-1} |-\dvg \mathbf{y}^* + f|^q
    \Big) \dx .
\end{multline*}

The remaining task is to estimate
$
\sup\limits_{\| \delta \|_{L^\infty(\Omega,\mathbb{R})} < \varepsilon} R(v,\mathbf{y}^*;\gamma_\circ, \delta)
$
from both sides.
This can be done pointwise. Define
\[
A(\mathbf{x}) := \tfrac{1}{p} |v(\mathbf{x})|^p +
\tfrac{1}{q} (1-q) \gamma_\circ(\mathbf{x})^{-q} |-\dvg \mathbf{y}^*(\mathbf{x}) + f(\mathbf{x})|^q
\]
and
\[
B(\mathbf{x};c) := -\tfrac{1}{2}(1-q)(\gamma_\circ(\mathbf{x})+c)^{-q-1}
|-\dvg \mathbf{y}^*(\mathbf{x}) + f(\mathbf{x})|^q .
\]
Then one can write
\[
R(v,\mathbf{y}^*;\gamma_\circ, \delta) =
\IntO
h(\delta,v,\mathbf{y}^*,c)
\dx
=
\IntO
\left( \delta(\mathbf{x})  A(\mathbf{x}) + \delta(\mathbf{x})^2 B(\mathbf{x};c) \right) \dx .
\]
Note that $B \geq 0$ a.e. $\mathbf{x}$ in $\Omega$, since $q>1$ and $\gamma_\circ+c>0$ by the fact that
$c \in (0,\delta)$ (or $c \in (\delta,0)$, if $\delta<0$) and (\ref{eq:deltabound}).
Thus the integrand is an upwards opening parabola w.r.t. $\delta$ (at each point)
and the maximal values are attained at the endpoints of the interval
$(-\varepsilon,\varepsilon)$. Moreover, the function
$c \rightarrow (\gamma_\circ(x)+c)^{-q-1}$ is
monotonically decreasing. Thus, the maximum of $B(\mathbf{x},c)$ w.r.t. $c$ can be estimated from
above by setting $c=-\varepsilon$ and from below by setting $c=\varepsilon$.
These observations lead to (\ref{eq:th1:low}) and (\ref{eq:th1:up}).
\end{proof}

\subsection{Numerical examples}
\label{se:num}

Consider the incompletely known reaction diffusion problem described in Sect.
\ref{se:ex:wc}, where $\Omega := (0,1)$. The corresponding primal energy functional is
\[
J(v) =
\int\limits_0^1 \left( \tfrac{1}{2} (v')^2 + \gamma \tfrac{1}{p} |v|^p - fv \right) \dxone
\]
and the respective dual energy functional is
\[
I^*(y^*)  =
-\int\limits_0^1 \left( \tfrac{1}{q} \left( \tfrac{1}{\gamma} \right)^{q-1} |-(y^*)'+f|^q
+ \tfrac{1}{2} |y^*|^2 \right) \dxone .
\]
The incompletely known function $\gamma$ is defined by (\ref{eq:datadef}).
The approximations $v_h$ and $y^*_h$ are computed by minimizing and maximizing
primal and dual energies (associated with the ``mean data'' $\gamma_\circ$) over
finite dimensional subspaces, respectively.
The minimization and maximization are done by Matlab
(version 2013a, \verb+fminunc+ -function, quasi-Newton method, BFGS-scheme
\cite{Broyden1970,Goldfarb1970,Fletcher1970,Shanno1970}).
The subspaces are
generated by the finite element method using standard Lagrange elements.
\begin{Remark}
Generally, there is no need to solve both the primal and dual problem.
If the applied solution method generates only an approximation of $u$,
then the approximation of $p^*$ can be typically obtained by postprocessing.
\end{Remark}
The accuracy of the pair $(v_h,y^*_h)$ is measured using the
compound functionals
\[
D_F(v,y') = \int\limits_0^1 \left(
\tfrac{1}{p} \gamma_\circ |v|^p +
\tfrac{1}{q} \left( \tfrac{1}{\gamma_\circ} \right)^{q-1} \! \! | -(y^*)' + f|^q -
(-(y^*)' + f) v  \right) \dxone
\]
and
\[
D_G(v',-y) = \tfrac{1}{2} \| v' + y^* \|_{L^2((0,1))}^2 ,
\]
and the respective duality gap.

\begin{Example} \label{ex:1}
Let $p=1.2$, $\gamma_\circ = 1$, and $f=1$. Approximations $v_h$ and
$y^*_h$ are computed using linear
Lagrange elements.
The convergence of the solution is shown in Fig. \ref{fig:ex1:conv}.
The
convergence of the duality gap together with the bounds for $\Eind$
(Theorem \ref{th:bounds}) with various levels of indeterminacy
in $\gamma$ (i.e., $\varepsilon$ value in
(\ref{eq:deltabound})) are depicted in Figure \ref{fig:ex1:bounds}.
The bounds for $\Eind$ can be easily used to generate
bounds for ratios $\rho^{\rm approx}$ and $\rho^{\rm indet}$ (defined in (\ref{eq:ratios})), which are shown in Figure
\ref{fig:ex1:ratios}.
If the function $\gamma$ is known by accuracy $\varepsilon = 0.005$, then refining the mesh
would be reasonable. However, if $\varepsilon = 0.01$, then all computations
with more than 60 DOFs are dubious. Moreover, if $\varepsilon = 0.05$, then already the
coarsest approximations with 11 DOFs is sufficient.
\end{Example}
\begin{figure}
\begin{center}
\includegraphics[width=0.9\textwidth]{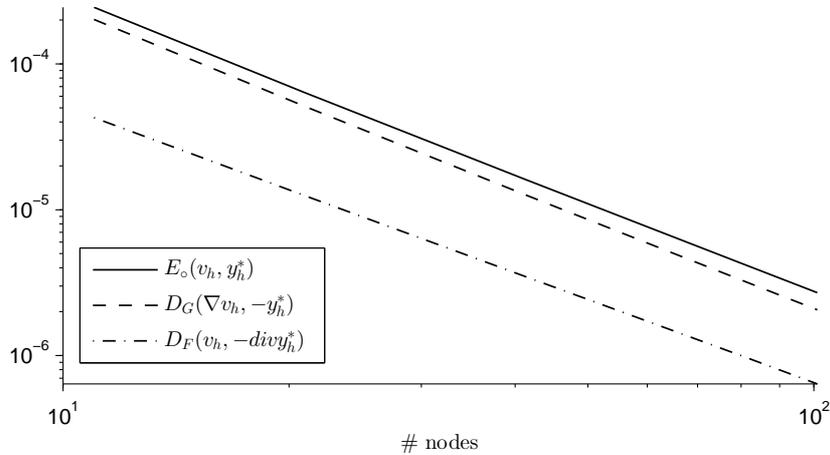}
\caption{Example \ref{ex:1}: Convergence of the approximations measured in terms of
compound functionals and the duality gap}
\label{fig:ex1:conv}
\end{center}
\end{figure}
\begin{figure}
\begin{center}
\includegraphics[width=0.9\textwidth]{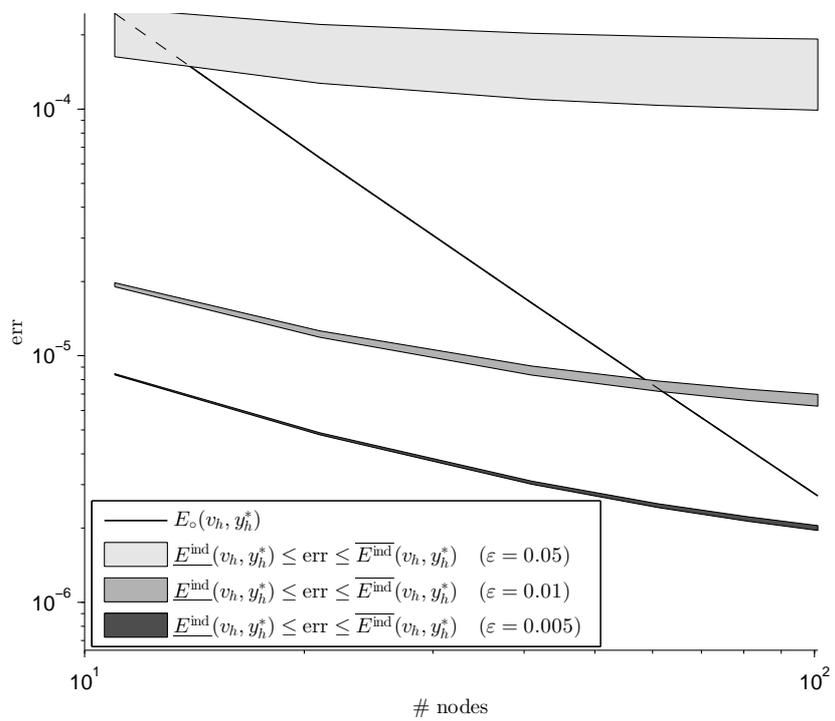}
\caption{Example \ref{ex:1}: Convergence of the approximation error in terms of the
duality gap and the error due to indeterminacy}
\label{fig:ex1:bounds}
\end{center}
\end{figure}
\begin{figure}
\begin{center}
\includegraphics[width=0.9\textwidth]{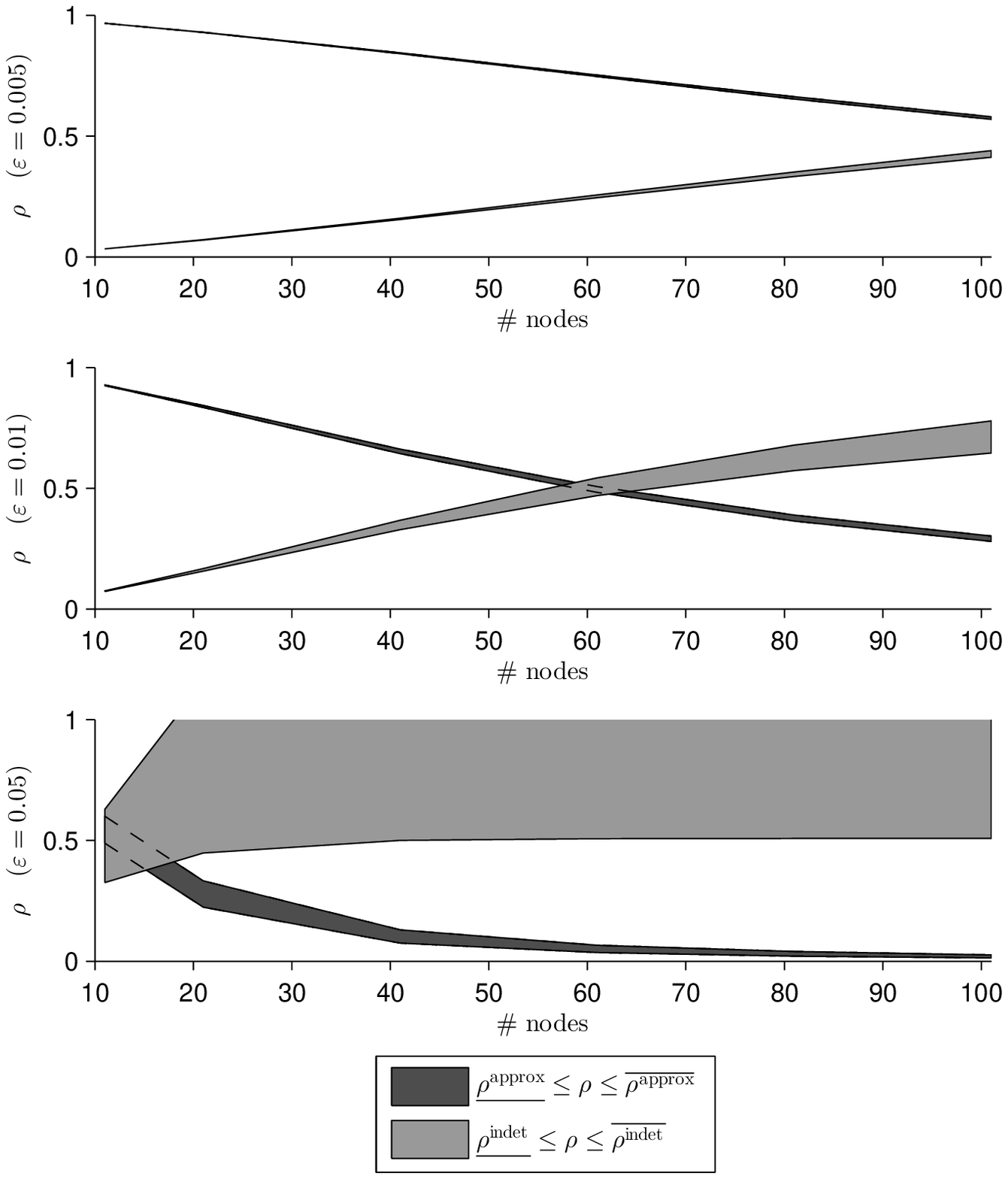}
\caption{Example \ref{ex:1}: Bounds for ratios $\rho^{\rm approx}$ and $\rho^{\rm indet}$
in (\ref{eq:ratios})}
\label{fig:ex1:ratios}
\end{center}
\end{figure}

In numerical practise, nonlinear problems have to be solved by some iterative method
(for suitable optimization methods, see, e.g.,
\cite{Bazaraaetall2006,BoydVanderberghe2004}).
Thus the approximation error is generated not only by the mesh, but also due to the
iteration. For analyst, it might be difficult to judge
whether one should refine the mesh or improve
the iterative solver in order to obtain more accurate approximations.
Moreover, computational cost of the iteration (e.g., optimization method)
increases quickly as the degrees of freedom are increased and computing
approximations on unreasonably refined meshes (w.r.t. data indeterminacy) should
be avoided.
 The following examples show how bounds from Theorem
\ref{th:bounds} can be applied to define
reasonable mesh size using a method of manufactured solutions.

\begin{Example} \label{ex:2}
Let $p=1.2$, $\gamma_\circ(x) = 1+x$, and  $u(x) = \sin(2 \pi x)$. Then
$p^*(x)= - u'(x)$ and the respective $f(x)$ is easy to compute by (\ref{eq:pow:nec}).
The pair $(I_h u, I_h p^*)$
denotes the interpolated solutions on a given mesh.
The convergence (using linear and quadratic Lagrange elements) of the
duality gap together with the bounds for $\Eind(I_h u, I_h p^*)$ with
different indeterminacy levels $\varepsilon$
are depicted in Fig. \ref{fig:ex2:bounds}.
It shows the efficiency of using quadratic elements and reasonable mesh sizes
for different indeterminacy levels $\varepsilon$ in (\ref{eq:deltabound}). In particular,
Fig. \ref{fig:ex2:bounds} indicates that the use of more than approximately 100 DOFs
(with quadratic elements) is unreasonable for the current problem setting unless the
data is known by accuracy much less than $\varepsilon = 0.005$.
\end{Example}
\begin{figure}
\begin{center}
\includegraphics[width=0.9\textwidth]{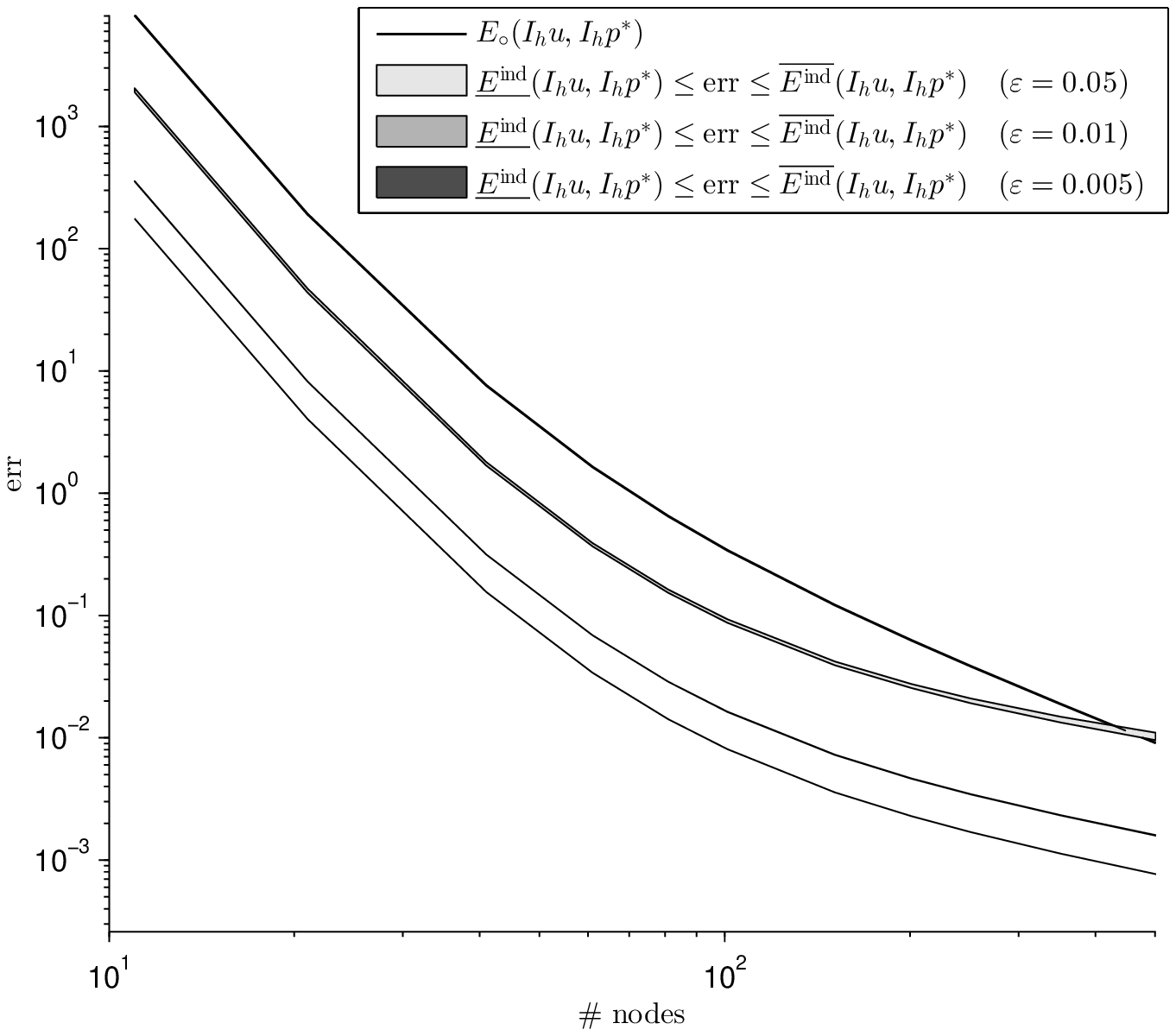} \\
\includegraphics[width=0.9\textwidth]{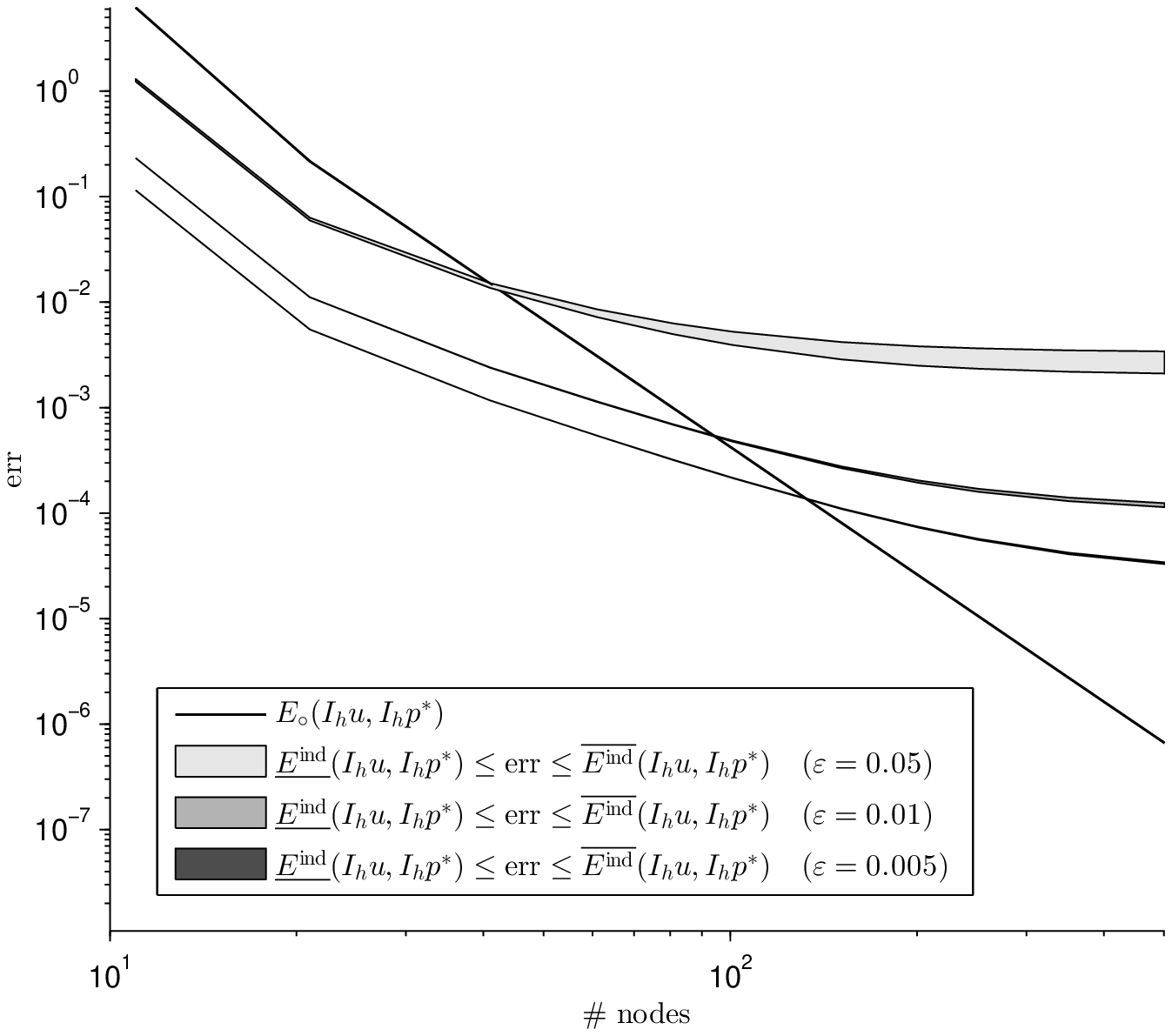} \\
\caption{Example \ref{ex:2}: Convergence of $E_\circ(I_h u, I_h p^*)$ for linear (above) and
quadratic (below) elements together with bounds for $\Eind(I_h u, I_h p^*)$ with
various indeterminacy levels $\varepsilon$ in (\ref{eq:deltabound})}
\label{fig:ex2:bounds}
\end{center}
\end{figure}

\begin{Example} \label{ex:3}
Repeat Example \ref{ex:2}, but set $u(x) = \sin(4 \pi x)$. This results in a
more oscillating $f$ in comparison with Example \ref{ex:2}.
The convergence (for quadratic elements) of the
duality gap together with the bounds for $\Eind(I_h u, I_h p^*)$ with
different indeterminacy levels $\varepsilon$
are depicted in Fig. \ref{fig:ex3:bounds}.
\end{Example}
\begin{figure}
\begin{center}
\includegraphics[width=0.9\textwidth]{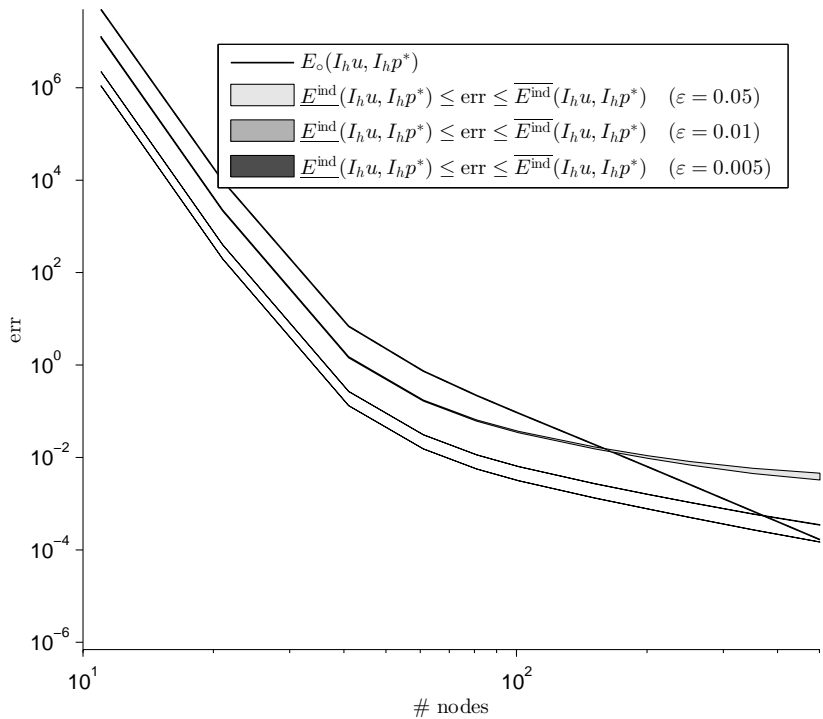}
\caption{Example \ref{ex:3}: Convergence of $E_\circ(I_h u, I_h p^*)$ for
quadratic elements together with bounds for $\Eind(I_h u, I_h p^*)$ with
various indeterminacy levels $\varepsilon$ in (\ref{eq:deltabound})}
\label{fig:ex3:bounds}
\end{center}
\end{figure}

The comparison between Examples \ref{ex:2} and \ref{ex:3} illustrates
that for a more complicated problem (highly oscillating righthand side)
the reasonable mesh refinement
level (in comparison with the data uncertainty) is much higher. In other words,
for a more complicated problem, the accuracy limit dictated by the
incompletely known data is not met without additional computational expenditure.
Note that this procedure could be done in a systematic way by precomputing
proper mesh refinement levels for a wide range of different known $u$
 (and the respective $f$) and indeterminacy levels.
Then an analyst could compare his or hers given $f$ and data accuracy
to the benchmark computations and define a reasonable characteristic mesh size.
The same information could be used as a stopping criteria for any adaptive
solver, since reasonable value for $E_\circ(v,y^*)$ would be known a priori.

It would be possible to study the model problem in great detail, e.g.,
the effect that the parameter $p$ has to the
indeterminacy error, compare the indeterminacy in the $\gamma'$ and $\gamma$ in
(\ref{eq:pow:J}), etc. However, it is not the aim of this paper to concentrate on
properties of this particular model problem, but to demonstrate a
convincing methodology that can be applied to study the effects of incompletely known
data for a wide range of convex minimization problems.

\bibliographystyle{plain}
\bibliography{../../../main_bibliography}

\end{document}